\newtheorem{theorem}{Theorem}
\newtheorem{definition}[theorem]{Definition}
\newtheorem{proposition}[theorem]{Proposition}
\newtheorem{corollary}[theorem]{Corollary}
\newtheorem{example}[theorem]{Example}
\theoremstyle{definition}
\newtheorem{remark}[theorem]{Remark}
\newcommand{\hh}{{\mathbb{H}}}
\newcommand{\cc}{{\mathbb{C}}}
\newcommand{\rr}{{\mathbb{R}}}
\newcommand{\s}{{\mathbb{S}}}
\newcommand{\punto}{\cdot}
\newcommand\re{\operatorname{Re}}
\newcommand\im{\operatorname{Im}}
\newcommand{\mk}{\mathfrak}
\newcommand{\ch}{\mathcal K}
\newcommand{\sn}{\mk{sn}}
\title{\bf The quaternionic Gauss-Lucas Theorem} 
\author{Riccardo Ghiloni\\
 Alessandro Perotti\\
\small Department of Mathematics, University of Trento\\ 
\small Via Sommarive 14, I-38123 Povo Trento, Italy\\
\small riccardo.ghiloni@unitn.it, alessandro.perotti@unitn.it}
\date{  }
\begin{document}

\maketitle


\begin{abstract}
The classic Gauss-Lucas Theorem for complex polynomials of degree $d\ge2$ has a natural reformulation over quaternions, obtained via rotation around the real axis. We prove that such a reformulation is true only for $d=2$. We present a new quaternionic version of the Gauss-Lucas Theorem valid for all $d\geq2$, together with some consequences.
\end{abstract}


\section{Introduction}

Let $p$ be a complex polynomial of degree $d\geq2$ and let $p'$ be its derivative. The Gauss-Lucas Theorem asserts that the zero set of $p'$ is contained in the convex hull $\ch(p)$ of the zero set of $p$. The classic proof uses the logarithmic derivative of $p$ and it strongly depends on the commutativity of $\cc$.

This note deals with a quaternionic version of such a classic result. 
We refer the reader to \cite{GeStoSt2013} for the notions and properties concerning the algebra $\hh$ of quaternions we need here.
The ring $\hh[X]$ of quaternionic polynomials is defined by fixing the position of the coefficients w.r.t.\  the indeterminate $X$ (e.g.\ on the right) and by imposing commutativity of $X$ with the coefficients when two polynomials are multiplied together (see e.g.\ \cite[\S 16]{Lam}). Given two polynomials $P,Q\in\hh[X]$, let $P\punto Q$ denote the product obtained in this way. If $P$ has real coefficients, then $(P\punto Q)(x)=P(x)Q(x)$.
In general, a direct computation (see \cite[\S 16.3]{Lam}) shows that if $P(x)\ne0$, then
\begin{equation}\label{product}
(P\punto Q)(x)=P(x)Q(P(x)^{-1}xP(x)),
\end{equation}
while $(P\punto Q)(x)=0$ if $P(x)=0$.
In this setting, a {(left) root} of a polynomial $P(X)=\sum_{h=0}^dX^h a_h$ is an element $x\in\hh$ such that $P(x)=\textstyle\sum_{h=0}^dx^h a_h=0$.

Given  $P(X)=\sum_{k=0}^dX^ka_k\in \hh[X]$, 
consider the polynomial $P^c(X)=\sum_{k=0}^dX^k\bar a_k$ and the \emph{normal polynomial} $N(P)=P \cdot P^c=P^c\cdot P$. Since $N(P)$ has real coefficients, it can be identified with a polynomial in $\rr[X]\subset\cc[X]$. We recall that a subset $A$ of $\hh$ is called \emph{circular} if, for each $x\in A$, $A$ contains the whole set (a 2-sphere if $x\not\in\rr$, a point if $x\in\rr$)
\begin{equation}\label{sx}
\s_x=\{pxp^{-1}\in\hh\;|\;p\in\hh^*\},
\end{equation}
where $\hh^*:=\hh\setminus\{0\}$. In particular, for any imaginary unit $I\in\hh$, $\s_I=\s$ is the 2-sphere of all imaginary units in $\hh$. For every subset $B$ of $\hh$ we define its \emph{circularization} as the set $\bigcup_{x\in B}\s_x$. It is well-known (\cite[\S3.3]{GeStoSt2013}) that the zero set $V(N(P))\subset\hh$ of the normal polynomial is the circularization of the zero set $V(P)$, which consists of isolated points or isolated 2-spheres of the form \eqref{sx} if $P\neq0$.

Let the degree $d$ of $P$ be at least 2 and let $P'(X)=\sum_{k=1}^dX^{k-1}ka_k$ be the derivative of $P$. 
It is known (see e.g.\ \cite{GGSarxiv}) that the Gauss-Lucas Theorem does not hold directly for quaternionic polynomials. For example, the polynomial $P(X)=(X-i)\punto (X-j)=X^2-X(i+j)+k$ has zero set $V(P)=\{i\}$, while $P'$ vanishes at $x=(i+j)/2$.

Since the zero set $V(P)
$ of $P$ is contained in the set $V(N(P))
$, a natural reformulation in $\hh[X]$ of the classic Gauss-Lucas Theorem is the following: $V(N(P'))\subset\ch(N(P))$ or equivalently
\begin{equation}\label{eq:g-l}
V(P')\subset\ch(N(P)),
\end{equation}
where $\ch(N(P))$ denotes the convex hull of $V(N(P))$ in $\hh$. This set is equal to the circularization of the convex hull of the zero set of $N(P)$ viewed as a polynomial in $\cc[X]\subset\hh[X]$. Recently two proofs of the above inclusion \eqref{eq:g-l} were presented in \cite{VlacciGL,GGSarxiv}.

Our next two propositions prove that inclusion \eqref{eq:g-l} is correct in its full generality only when $d=2$. 

\section{Gauss-Lucas polynomials}

\begin{definition}
Given a polynomial $P\in\hh[X]$ of degree $d\geq 2$ we say that $P$ is a \emph{Gauss-Lucas polynomial} if $P$ satisfies \eqref{eq:g-l}. 
\end{definition}

\begin{proposition}
If $P$ is a polynomial in $\hh[X]$ of degree $2$, then $V(N(P))=\s_{x_1}\cup\s_{x_2}$ for some $x_1,x_2\in\hh$ (possibly with $\s_{x_1}=\s_{x_2}$) and
\[
V(P')\subset\bigcup_{y_1\in\s_{x_1},y_2\in\s_{x_2}}\left\{\frac{y_1+y_2}{2}\right\}.
\]
In particular every polynomial $P\in\hh[X]$ of degree $2$ is a Gauss-Lucas polynomial.
\end{proposition}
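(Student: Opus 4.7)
The plan is to reduce to the case of a monic polynomial, factor it into two linear factors using the fundamental theorem of algebra for quaternions, and then identify everything explicitly.

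First I would normalize. Writing $P(X)=X^2a_2+Xa_1+a_0$ with $a_2\ne0$, I would set $Q=P\punto a_2^{-1}=X^2+Xb+c$ where $b=a_1a_2^{-1}$ and $c=a_0a_2^{-1}$. Since right multiplication by a nonzero scalar does not affect the zero sets, one checks directly that $V(Q')=V(P')$ and $V(N(Q))=V(N(P))$ (indeed $N(Q)=|a_2|^{-2}N(P)$). So there is no loss in assuming $P$ itself is monic.

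Next, by the fundamental theorem of algebra for quaternionic polynomials, $P$ has a left root $\alpha\in\hh$, and polynomial division on the right by $X-\alpha$ produces a linear quotient, so $P(X)=(X-\alpha)\punto(X-\beta)$ for some $\beta\in\hh$. Expanding the $\punto$-product (recall $X$ commutes with constants inside this product) gives $P(X)=X^2-X(\alpha+\beta)+\alpha\beta$, i.e., $b=-(\alpha+\beta)$ and $c=\alpha\beta$.

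Now I would identify the two zero sets of interest. For the normal polynomial I would use the multiplicativity $N(P_1\punto P_2)=N(P_1)N(P_2)$ together with the identity $N(X-\gamma)=X^2-2\re(\gamma)X+|\gamma|^2$, whose zero set in $\hh$ is exactly $\s_\gamma$. This yields
\[
V(N(P))=V(N(X-\alpha))\cup V(N(X-\beta))=\s_\alpha\cup\s_\beta,
\]
so we may take $x_1=\alpha$, $x_2=\beta$. For the derivative, $P'(X)=2X+b$, a polynomial with the single root $-b/2=(\alpha+\beta)/2$, so $V(P')=\{(\alpha+\beta)/2\}$.

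Finally, since $\alpha\in\s_\alpha=\s_{x_1}$ and $\beta\in\s_\beta=\s_{x_2}$, the unique point $(\alpha+\beta)/2$ of $V(P')$ lies in the claimed set of midpoints, proving the inclusion and hence the Gauss–Lucas property for degree $2$. There is no real obstacle here; the only delicate point is justifying the reduction to the monic case and the multiplicativity of $N$, both of which are standard.
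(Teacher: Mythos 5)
Your proof is correct and follows essentially the same route as the paper: reduce to the monic case by right multiplication by $a_2^{-1}$, factor $P=(X-\alpha)\punto(X-\beta)$, identify $V(N(P))=\s_\alpha\cup\s_\beta$, and compute $V(P')=\{(\alpha+\beta)/2\}$. The only cosmetic difference is that you invoke multiplicativity of $N$ to identify $V(N(P))$, whereas the paper observes directly that $x_1\in V(P)$ and $\bar x_2\in V(P^c)$; both are standard and equivalent here.
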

\begin{proof}
Let $P(X)=X^2a_2+Xa_1+a_0\in\hh[X]$ with $a_2\neq0$. Since $P\cdot a_2^{-1}=Pa_2^{-1}$, $(P\cdot a_2^{-1})'=P'\cdot a_2^{-1}=P'a_2^{-1}$, we can assume $a_2=1$. Consequently, $P(X)=(X-x_1)\cdot(X-x_2)=X^2-X(x_1+x_2)+x_1x_2$ for some $x_1,x_2\in\hh$. Then $x_1\in V(P)$ and $\bar x_2\in V(P^c)$, since $P^c(X)=(X-\bar x_2)\cdot(X-\bar x_1)$. Therefore $x_1,x_2\in V(N(P))$. On the other hand $V(P')=\{(x_1+x_2)/2\}$ as desired.
\end{proof}

\begin{remark}
Let $P(X)=\sum_{k=0}^dX^ka_k\in\hh[X]$ of degree $d\geq2$ and let $Q:=P\cdot a_d^{-1}$ be the corresponding monic polynomial. Since $V(P)=V(Q)$ and $V(P')=V(Q')$, $P$ is a Gauss-Lucas polynomial if and only if $Q$ is.
\end{remark}

\begin{proposition} \label{prop:-4}
Let $P\in\hh[X]$ of degree $d\geq3$. Suppose that $N(P)(X)=X^{2e}\cdot(X^2+1)^{d-e}$ for some $e<d$ and that $N(P')(X)=\sum_{k=0}^{2d-2}X^kb_k$ contains a unique monomial of odd degree, that is, $b_k\neq0$ for a unique odd $k$. Then $P$ is not a Gauss-Lucas polynomial.
\end{proposition}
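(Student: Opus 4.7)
My plan is to identify $\ch(N(P))$ explicitly and then rule out all but one point of it as a location for a root of $N(P')$, using the parity hypothesis. First I would note that from $N(P)(X)=X^{2e}(X^2+1)^{d-e}$ the complex zeros of $N(P)$ are $0$ and $\pm\ui$, so $V(N(P))=\{0\}\cup\s\subset\hh$. Since both $\{0\}$ and $\s$ lie in the imaginary hyperplane $\im(\hh)=\{v\in\hh\mid\re(v)=0\}$, the convex hull $\ch(N(P))$ equals the closed unit $3$-ball of $\im(\hh)$; in particular it is a circular subset of $\hh$. Because $V(N(P'))$ is the circularization of $V(P')$ and $\ch(N(P))$ is circular, the inclusion $V(P')\subseteq\ch(N(P))$ is equivalent to $V(N(P'))\subseteq\ch(N(P))$. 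It therefore suffices to produce a root of $N(P')$ outside $\im(\hh)$.

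Next I would exploit a parity split of $N(P')(y)$ at a purely imaginary point. For $y\in\im(\hh)$ one has $y^2=-|y|^2\in\rr$, hence $y^{2j}=(-|y|^2)^j$ and $y^{2j+1}=(-|y|^2)^jy$. Writing $N(P')(X)=\sum_k X^k b_k$ this gives
\[
N(P')(y)=A(y)+y\,B(y),
\]
with $A(y)=\sum_j(-|y|^2)^jb_{2j}\in\rr$ and $B(y)=\sum_j(-|y|^2)^jb_{2j+1}\in\rr$. For $y\in\im(\hh)\setminus\{0\}$ the elements $1$ and $y$ are $\rr$-linearly independent in $\hh$, so $N(P')(y)=0$ would force $A(y)=B(y)=0$. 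By the uniqueness hypothesis, the sum defining $B(y)$ collapses to the single monomial $B(y)=(-|y|^2)^{j_0}b_{k_0}$ with $k_0=2j_0+1$, which is nonzero for every $y\neq0$. Hence $V(N(P'))\cap\im(\hh)\subseteq\{0\}$.

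To conclude, I would observe that $V(N(P'))$ must contain some nonzero element, for otherwise $N(P')=cX^{2d-2}$ with $c\in\rr$ and its only nonzero coefficient would sit at the even index $2d-2$, contradicting the existence of an odd $k_0$ with $b_{k_0}\neq0$. Any such nonzero element of $V(N(P'))$ lies outside $\im(\hh)\supseteq\ch(N(P))$, so $V(N(P'))\not\subseteq\ch(N(P))$ and hence, by circularity of $\ch(N(P))$, also $V(P')\not\subseteq\ch(N(P))$.

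I expect the main obstacle to be spotting the parity split $N(P')(y)=A(y)+yB(y)$ together with the $\rr$-linear independence of $1$ and $y$ for imaginary $y\neq0$; once this is in place, the uniqueness assumption forces the $y$-component to be a non-vanishing monomial in $-|y|^2$, and everything else is bookkeeping with the circularization correspondence.
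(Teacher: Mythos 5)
Your proof is correct and follows essentially the same route as the paper's: the parity split $N(P')(y)=A(y)+y\,B(y)$ for purely imaginary $y$ is exactly the paper's decomposition $N(P')(it)=G(t)+iL(t)$ (the paper works on $i\rr$ and then invokes circularity of $V(N(P'))$, while you work directly on all of $\im(\hh)$ via $y^2=-|y|^2$), and the final step producing a nonzero root from the presence of both the odd monomial and the even leading term $X^{2d-2}$ is the same. No gaps.
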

\begin{proof}
Since $V(N(P))\subset\{0\}\cup\s$, $\ch(N(P))\subset\im(\hh)=\{x\in\hh\,|\,\re(x)=0\}$. Then it suffices to show that $N(P')$ has at least one root in $\hh\setminus\im(\hh)$. Let $G,L\in\rr[X]$ be the unique real polynomials such that $G(t)+iL(t)=N(P')(it)$ for every $t\in\rr$. Since $N(P')$ contains a unique monomial of odd degree, say $X^{2\ell+1}b_{2\ell+1}$, then $L(t)=(-1)^\ell b_{2\ell+1}t^{2\ell+1}$ and hence $V(N(P'))\cap i\rr\subset\{0\}$. Being $V(N(P'))$ a circular set, it holds $V(N(P'))\cap\im(\hh)\subset\{0\}$. 
Since $N(P')$ contains at least two monomials, namely those of degrees $2\ell+1$ and $2d-2$, we infer that it must have a nonzero root. Therefore 
$V(N(P'))\not\subset\{0\}$ and hence $V(N(P'))\not\subset\im(\hh)$, as desired.

\end{proof}

\begin{corollary}\label{counterexample}
Let $d\geq3$ and let
\[
P(X)=X^{d-3}\cdot(X-i)\cdot(X-j)\cdot(X-k).
\]
Then $N(P)(X)=X^{2d-6}\cdot(X^2+1)^3$ and $N(P')$ contains a unique monomial of odd degree, namely $-4X^{2d-5}$. In particular $P$ is not a Gauss-Lucas polynomial.
\end{corollary}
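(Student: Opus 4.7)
The plan is to apply Proposition \ref{prop:-4} to the given polynomial, after verifying its two hypotheses: that $N(P)(X)=X^{2d-6}(X^2+1)^3$ (so that the exponent $e$ of Proposition \ref{prop:-4} equals $d-3$) and that the only odd-degree monomial in $N(P')$ is $-4X^{2d-5}$.

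For the first hypothesis I would invoke multiplicativity of $N$ with respect to the $\cdot$-product. Since $(A\cdot B)^c=B^c\cdot A^c$ and $N(B)$ has real coefficients (hence commutes with $A$ under $\cdot$), one has $N(A\cdot B)=A\cdot B\cdot B^c\cdot A^c=A\cdot N(B)\cdot A^c=N(A)\cdot N(B)$. Applied inductively to $P=X^{d-3}\cdot(X-i)\cdot(X-j)\cdot(X-k)$, the computation reduces to $N(X^{d-3})=X^{2d-6}$ together with $N(X-q)=(X-q)\cdot(X+\bar q)=X^2+1$ for $q\in\{i,j,k\}$, and the product of these is the claimed $X^{2d-6}(X^2+1)^3$.

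For the second hypothesis I would begin by expanding $P$ in closed form. Using $ij=k$, $jk=i$, $ki=j$ one checks $(X-i)\cdot(X-j)\cdot(X-k)=X^3-X^2\alpha+X\beta+1$ with $\alpha:=i+j+k$ and $\beta:=i-j+k$. Hence $P(X)=X^d-X^{d-1}\alpha+X^{d-2}\beta+X^{d-3}$, and
\[
P'(X)=dX^{d-1}-(d-1)X^{d-2}\alpha+(d-2)X^{d-3}\beta+(d-3)X^{d-4}.
\]
The support of $P'$ thus lies in $\{d-4,d-3,d-2,d-1\}$ (with the degree $d-4$ coefficient vanishing when $d=3$), so the support of $N(P')=P'\cdot(P')^c$ lies in $[2d-8,2d-2]$, and the candidate odd degrees are $2d-7$, $2d-5$, $2d-3$.

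Finally I would compute these three coefficients from the convolution formula, using $\bar\alpha=-\alpha$ and $\bar\beta=-\beta$. The coefficient $b_{2d-3}=d\cdot(d-1)\alpha+(-(d-1)\alpha)\cdot d=0$, and by the same symmetric cancellation $b_{2d-7}=0$. For $b_{2d-5}$ the real-real pairs contribute $d(d-3)+(d-3)d=2d(d-3)$ while the imaginary-imaginary pairs contribute $(d-1)(d-2)(\alpha\beta+\beta\alpha)$; the identity $\alpha\beta+\beta\alpha=-2\langle\alpha,\beta\rangle$ for pure imaginary quaternions gives $\alpha\beta+\beta\alpha=-2(1-1+1)=-2$, hence $b_{2d-5}=2d(d-3)-2(d-1)(d-2)=-4$. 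The main subtlety I expect to navigate is keeping straight the right-coefficient convention and the order-reversal of conjugation in the $\cdot$-product, but once this is handled the cancellations of $b_{2d-3}$ and $b_{2d-7}$ together with the identity $\alpha\beta+\beta\alpha=-2$ produce the value $-4$ cleanly, and Proposition \ref{prop:-4} delivers the conclusion.
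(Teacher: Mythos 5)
Your proposal is correct and follows essentially the same route as the paper: expand $P$ and $P'$ explicitly and verify the hypotheses of Proposition~\ref{prop:-4} by direct computation of $N(P')$. The only (harmless) refinements are that you invoke multiplicativity of $N$ for the factorization of $N(P)$ and compute only the three candidate odd-degree coefficients of $N(P')$ rather than the full normal polynomial; all your quaternion arithmetic, including $\alpha\beta+\beta\alpha=-2$ and the resulting coefficient $-4$ of $X^{2d-5}$, checks out against the paper's displayed formulas.
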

\begin{proof}
By a direct computation we obtain:
\begin{align}
P(X)&=X^d-X^{d-1}(i+j+k)+X^{d-2}(i-j+k)+X^{d-3},\\
P'(X)&=dX^{d-1}-(d-1)X^{d-2}(i+j+k)+(d-2)X^{d-3}(i-j+k)+(d-3)X^{d-4},\label{P1}
\\
N(P')(X)&=d^2X^{2d-2}+3(d-1)^2X^{2d-4}-4X^{2d-5}+3(d-2)^2X^{2d-6}+(d-3)^2X^{2d-8}.
\end{align}
Proposition \ref{prop:-4} implies the thesis.
\end{proof}

Let $I\in\s$ and let $\cc_I\subset\hh$ be the complex plane generated by 1 and $I$. Given a polynomial $P\in\hh[X]$, we will denote by $P_I:\cc_I\to\hh$ the restriction of $P$ to $\cc_I$. {\it If $P_I$ is not constant, we will denote by $\ch_{\cc_I}(P)$ the convex hull in the complex plane $\cc_I$ of the zero set $V(P_I)=V(P)\cap\cc_I$. If $P_I$ is constant, we set $\ch_{\cc_I}(P)=\cc_I$.}

If $\cc_I$ contains every coefficient of $P\in\hh[X]$, then we say that $P$ is a \emph{$\cc_I$-polynomial}.

\begin{proposition}
The following holds:
\begin{enumerate}
 \item[(1)] Every $\cc_I$-polynomial of degree $\geq2$ is a Gauss-Lucas polynomial.
 \item[(2)] Let $d\geq3$, let $\hh_d[X]=\{P\in\hh[X]\,|\,\deg(P)=d\}$ and let $E_d[X]$ be the set of all elements of $\hh_d[X]$ that are not Gauss-Lucas polynomials.
 Identify each $P(X)=\sum_{k=0}^dX^ka_k$ in $\hh_d[X]$ with $(a_0,\ldots,a_d)\in\hh^d\times\hh^*\subset\rr^{4d+4}$ and endow $\hh_d[X]$ with the relative Euclidean topology. 
 Then $E_d[X]$ is a nonempty open subset of $\hh_d[X]$. Moreover $E_d[X]$ is not dense in $\hh_d[X]$, being $X^d-1$ an interior point of its complement. 
\end{enumerate}
\end{proposition}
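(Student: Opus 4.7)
For part (1), the plan is to reduce the statement to the classical complex Gauss-Lucas Theorem applied inside $\cc_I$. Since the $\punto$-product of two $\cc_I$-polynomials coincides with their ordinary product in $\cc_I[X]$, the polynomials $P$, $P'$, $P^c$ and $N(P)$ all lie in $\cc_I[X]$, and the classical Gauss-Lucas Theorem in $\cc_I$ gives
\[
V(P')\cap\cc_I\subset\mathrm{conv}_{\cc_I}(V(P)\cap\cc_I)\subset\mathrm{conv}_{\cc_I}(V(N(P))\cap\cc_I).
\]
To lift this from $\cc_I$ to $\hh$, I would invoke the following structural fact for a $\cc_I$-polynomial $Q$: every $q\in V(Q)$ either lies in $\cc_I$, or sits on a full sphere $\s_z$ with $z,\bar z\in V(Q)\cap\cc_I$ (because then the real quadratic $(X-z)(X-\bar z)$ divides $Q$ and annihilates $\s_z$). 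Applied to $Q=P'$, either $q$ already lies in $\mathrm{conv}_{\cc_I}(V(N(P))\cap\cc_I)\subset\ch(N(P))$, or the pair $z,\bar z$ does and hence the sphere $\s_z$ is contained in the circularization of that convex hull, which equals $\ch(N(P))$.

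For part (2), nonemptyness is Corollary~\ref{counterexample}. For openness, fix $P_0\in E_d[X]$; since $V(N(P_0'))$ and $\ch(N(P_0))$ are both circular, the non-containment yields a full sphere $\s_{q_0}\subset V(N(P_0'))$ at positive distance $\delta>0$ from the compact convex set $\ch(N(P_0))$. I would then use two standard continuity facts: (i) the multiset of complex roots of a polynomial depends continuously on its coefficients in Hausdorff distance, and (ii) the convex-hull operator is Hausdorff-continuous on compact subsets of $\rr^4$. Because the coefficients of $N(P)$ and $N(P')$ are polynomial expressions in those of $P$, both $V(N(P))$ and $V(N(P'))$ depend continuously on $P\in\hh_d[X]$, and so does $\ch(N(P))$. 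Thus for $P$ sufficiently close to $P_0$ I find $q\in V(N(P'))$ within $\delta/3$ of $q_0$ while $d_H(\ch(N(P)),\ch(N(P_0)))<\delta/3$, forcing $q\notin\ch(N(P))$ and $P\in E_d[X]$.

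For the interior-of-complement assertion I would take $P_0=X^d-1$, for which $V(P_0')=\{0\}$ and $V(N(P_0))$ is the circularization of the set of $d$-th roots of unity. Because $d\geq 3$, the convex hull of the $d$-th roots of unity in $\cc$ is a regular $d$-gon containing an open disk around the origin, and circularizing gives an open ball $B(0,r)\subset\ch(N(P_0))$ of some radius $r>0$. By the same continuity as above, for $P$ near $P_0$ the set $V(P')\subset V(N(P'))$ lies in an arbitrarily small ball around $0$, and a support-function argument shows that if $B(0,r)\subset A$ for a compact convex $A\subset\rr^4$ and $d_H(A,A')<\epsilon<r$ for another compact convex $A'$, then $B(0,r-\epsilon)\subset A'$. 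Applying this to $A=\ch(N(P_0))$ and $A'=\ch(N(P))$ leaves, say, $B(0,r/2)$ inside $\ch(N(P))$, so $V(P')\subset\ch(N(P))$ and $P$ is Gauss-Lucas. This exhibits $X^d-1$ as an interior point of $\hh_d[X]\setminus E_d[X]$, so $E_d[X]$ is not dense.

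The step I expect to be the main obstacle is precisely the interior-preservation lemma used above: Hausdorff closeness of two compact sets does not on its own propagate open balls from one to the other, so the argument really relies on the convexity of $\ch(N(P))$ through the support-function characterization of interior points. Once that small lemma is in place, the remaining ingredients (continuity of roots, Hausdorff continuity of convex hulls, and the elementary geometric computation at $X^d-1$) fit together routinely.
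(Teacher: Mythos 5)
Your proposal is correct and follows essentially the same route as the paper: part (1) via the classical Gauss--Lucas Theorem on $\cc_I$ together with the structure of the zero set of a $\cc_I$-polynomial, and part (2) via Corollary~\ref{counterexample} for nonemptiness, continuity of the roots of $N(P)$ and $N(P')$ for openness, and the interior point $X^d-1$ for non-density. The only difference is that you make explicit the Hausdorff-continuity of $P\mapsto\ch(N(P))$ and the support-function lemma propagating the ball $B(0,r)$, details which the paper's terser proof leaves implicit.
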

\begin{proof}
If $P$ is a $\cc_I$-polynomial, then $P_I$ can be identified with an element of $\cc_I[X]$. Consequently, the classic Gauss-Lucas Theorem gives $V(P')\cap\cc_I=V(P_I')\subset\ch_{\cc_I}(P)$. 
The zero set of the $\cc_I$-polynomial $P'$ has a particular structure (see \cite[Lemma~3.2]{GeStoSt2013}): $V(P')$ is the union of $V(P')\cap\cc_I$ with the set of spheres $\s_x$ such that $x,\bar x\in V(P'_I)$. It follows that
\[
V(P')\subset\ch(N(P)).
\]

This proves (1).

Now we prove (2). By Corollary \ref{counterexample} we know that $E_d[X]\neq\emptyset$. If $P\in E_d[X]$, then $V(N(P'))\not\subset\ch(N(P))$. 
$N(P)$ and $N(P')$ 
are polynomials with real coefficients. Since the roots of $N(P)$ and of $N(P')$ depend continuously on the coefficients of $P$ and $\ch(N(P))$ is closed in $\hh$, for every $Q\in\hh_d[X]$ sufficiently close to $P$, $V(N(Q'))$ is not contained in $\ch(N(Q))$, that is $Q\in E_d[X]$.

To prove the last statement, observe that $P(X)=X^d-1$ is not in $E_d[X]$ from part (1). Since $V(P')=V(N(P'))=\{0\}$ is contained in the interior of the set $\ch(N(P))$, for every $Q\in\hh_d[X]$ sufficiently close to $P$, $V(Q')$ is still contained in $\ch(N(Q))$.
\end{proof}


\section{A quaternionic Gauss-Lucas Theorem}

Let $P(X)=\sum_{k=0}^dX^ka_k\in\hh[X]$ of degree $d\geq2$. For every $I\in\s$, let $\pi_I:\hh\to\hh$ be the orthogonal projection onto $\cc_I$ and $\pi_I^\bot=id-\pi_I$. Let $P^I(X):=\sum_{k=1}^dX^ka_{k,I}$ be the $\cc_I$-polynomial with coefficients $a_{k,I}:=\pi_I(a_k)$. 

\begin{definition}
We define the \emph{Gauss-Lucas snail of $P$} as the following subset $\sn(P)$ of $\hh$:
\[
\sn(P):=\bigcup_{I\in\s}\ch_{\cc_I}(P^I).
\]
\end{definition}

Our quaternionic version of the Gauss-Lucas Theorem reads as follows. 

\begin{theorem}\label{thm}
For every polynomial $P\in\hh[X]$ of degree $\geq2$, 
\begin{equation}\label{snail}
V(P')\subset\sn(P).
\end{equation}
\end{theorem}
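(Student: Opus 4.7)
The plan is to deduce the inclusion from the classical complex Gauss--Lucas theorem by passing to a single well-chosen slice $\cc_I$. Given a root $y \in V(P')$, I would start by choosing $I \in \s$ so that $y \in \cc_I$: any $I$ works when $y \in \rr$, and otherwise the unique $I = \im(y)/|\im(y)|$ does the job.

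The key algebraic input is the compatibility of the real-orthogonal projection $\pi_I : \hh \to \cc_I$ with left multiplication by elements of $\cc_I$. Decomposing $\hh = \cc_I \oplus \cc_I^\bot$ and observing that $\cc_I \cdot \cc_I \subset \cc_I$ and $\cc_I \cdot \cc_I^\bot \subset \cc_I^\bot$ (indeed $\cc_I^\bot = \cc_I K$ for any $K \in \s$ orthogonal to $I$), one verifies the identity $\pi_I(zw) = z\,\pi_I(w)$ whenever $z \in \cc_I$. Since $y^{k-1} \in \cc_I$ for each $k$, applying $\pi_I$ to $P'(y) = 0$ produces
\[
0 \;=\; \pi_I\Bigl(\sum_{k=1}^d y^{k-1} k\, a_k\Bigr) \;=\; \sum_{k=1}^d y^{k-1} k\, a_{k,I} \;=\; (P^I)'(y),
\]
so $y$ is a zero of the derivative of the $\cc_I$-polynomial $P^I$.

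To conclude, I would identify $P^I$ with a complex polynomial via $\cc_I \cong \cc$ and invoke the classical Gauss--Lucas theorem. If $\deg(P^I) \geq 2$, it yields $y \in \ch_{\cc_I}(V(P^I)) = \ch_{\cc_I}(P^I) \subset \sn(P)$, as required. The remaining cases are handled quickly: a degree-one $P^I$ would have a nonzero constant derivative, contradicting $(P^I)'(y) = 0$; and because $P^I$ carries no constant term by definition, the only way for it to be constant is $P^I \equiv 0$, in which case the convention $\ch_{\cc_I}(P^I) = \cc_I$ trivially contains $y$.

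The main obstacle I foresee is really just verifying the compatibility $\pi_I(zw) = z\,\pi_I(w)$ for $z \in \cc_I$, which lets one decouple the $\cc_I$-part of $P'(y)$ from its $\cc_I^\bot$-part. Once that identity is in hand, the quaternionic Gauss--Lucas theorem reduces almost mechanically to the classical one applied on the correctly chosen slice.
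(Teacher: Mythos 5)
Your proof is correct and follows essentially the same route as the paper's: both restrict to the slice $\cc_I$ containing the given critical point, use the orthogonal splitting $\hh=\cc_I\oplus\cc_I^\bot$ (your identity $\pi_I(zw)=z\,\pi_I(w)$ for $z\in\cc_I$ is exactly the paper's observation that $P'_I$ decomposes into a $\cc_I$-valued part $(P^I)'$ and a $\cc_I^\bot$-valued part) to conclude $(P^I)'(y)=0$, and then invoke the classical Gauss--Lucas theorem on that slice. Your explicit treatment of the degenerate cases where $P^I$ has degree $\le 1$ is a small point the paper leaves implicit, but the argument is the same.
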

\begin{proof}
Let $P(X)=\sum_{k=0}^dX^ka_k$ in $\hh_d[X]$ with $d\geq2$. We can decompose the restriction of $P$ to $\cc_I$ as $P_I=\pi_I\circ P_I+\pi_I^\bot\circ P_I={P^I}_{|\cc_I}+\pi_I^\bot\circ P_I$. If $x\in\cc_I$, then $P^I(x)\in\cc_I$ while $(\pi_I^\bot\circ P_I)(x)\in\cc_I^\bot$. The same decomposition holds for $P'$. This implies that $V(P')\cap\cc_I\subset V((P^I)')\cap\cc_I$. The classic Gauss-Lucas Theorem applied to $P^I$ on $\cc_I$ gives $V(P')\cap\cc_I\subset \ch_{\cc_I}(P^I)$. Since $V(P')=\bigcup_{I\in\s}(V(P')\cap\cc_I)$, the inclusion \eqref{snail} is proved.
\end{proof}

If $P$ is monic Theorem \ref{thm} has the following equivalent formulation: {\it For every monic polynomial $P\in\hh[X]$ of degree $\geq2$, it holds}
\begin{equation}
\sn(P')\subset\sn(P).
\end{equation}

\begin{remark}\label{rem:monic}
If $P$ is a nonconstant monic polynomial in $\hh[X]$, then two properties hold: 
\begin{itemize}
\item[(a)]
$\ch_{\cc_I}(P^I)$ is a compact subset of $\cc_I$ for every $I\in\s$.
\item[(b)] 
$\ch_{\cc_I}(P^I)$ depends continuously on $I$.
\end{itemize}
Let $I\in\s$. Since $P$ is monic, also $P^I$ is a monic, nonconstant polynomial and then $\ch_{\cc_I}(P^I)$ is a compact subset of $\cc_I$. This prove property (a). To see that (b) holds, one can apply the Continuity theorem for monic polynomials (see e.g.~\cite[Theorem~1.3.1]{RahmanSchmeisser}). The roots of $P^I$ depend continuously on the coefficients of $P^I$, which in turn depend continuously on $I$. Therefore the convex hull $\ch_{\cc_I}(P^I)$ depends continuously on $I$.
Observe that (a) and (b) can not hold for polynomials that are not monic. For example, let $P(X)=X^2i$. Then, given $I=\alpha_1i+\alpha_2j+\alpha_3k\in\s$,  $\ch_{\cc_I}(P^I)=\{0\}$ if $\alpha_1\ne0$ and $\ch_{\cc_I}(P^I)=\cc_I$ if $\alpha_1=0$, since in this case $P^I$ is constant.

\end{remark}

\begin{remark}\label{rem:closed}
If $P$ is a monic polynomial in $\hh[X]$ of degree $\geq2$, then its Gauss-Lucas snail is a closed subset of $\hh$. 
To prove this fact, consider $q\in\hh\setminus\sn(P)$ and choose $I\in\s$ such that $q\in\cc_I$. Write $q=\alpha+I\beta\in\cc_I$ for some $\alpha,\beta\in\rr$ and define $z:=\alpha+i\beta\in\cc$. Since $P$ is monic, 
$\sn(P)\cap\cc_I=\ch_{\cc_I}(P^I)$ is a compact subset of $\cc_I$. Moreover, 
$\ch_{\cc_I}(P^I)$ depends continuously on $I$, and then there exist an open neighborhood $U_I$ of $z$ in $\cc$ and an open neighborhood $W_I$ of $I$ in $\s$ such that the set
\[
\textstyle
[U_I,W_I]:=\bigcup_{J\in W_I}\{a+Jb\in\cc_J\;|\; a+ib\in U_I\}
\]
is an open neighborhood of $q$ in $\bigcup_{J\in W_I}\cc_J$, and it is disjoint from $\sn(P)$. If $q\not\in\rr$ then $q$ is an interior point of $\hh\setminus\sn(P)$, because $[U_I,W_I]$ is a neighborhood of $q$ in $\hh$ as well. 
Now assume that $q\in\rr$. Since $\s$ is compact there exist $I_1,\ldots,I_n\in\s$ such that $\bigcup_{\ell=1}^nW_{I_\ell}=\s$. It follows that $[\bigcap_{\ell=1}^nU_\ell,\s]$ is a neighborhood of $q$ in $\hh$, which is  disjoint from $\sn(P)$. Consequently $q$ is an interior point of $\hh\setminus\sn(P)$ also in this case. This proves that $\sn(P)$ is closed in $\hh$.

In Proposition \ref{pro:sn-compact} below we will show that the Gauss-Lucas snail of a monic polynomial in $\hh[X]$ of degree $\geq2$ is also a compact subset of $\hh$.
\end{remark}

If all the coefficients of $P$ are real, then $\sn(P)$ is a circular set. 
In general, $\sn(P)$ is neither closed nor bounded nor circular, as shown in the next example.

\begin{example}
Let $P(X)=X^2i+X$. Given $I=\alpha_1i+\alpha_2j+\alpha_3k\in\s$, $P^I(x)=X^2I\alpha_1+X$ and then $\ch_{\cc_I}(P^I)=\{0\}$ if $\alpha_1=0$ while $\ch_{\cc_I}(P^I)$ is the segment from 0 to $I{\alpha_1}^{-1}$ if $\alpha_1\ne0$. It follows that
$\sn(P)=\{x_1i+x_2j+x_3k\in\im(\hh)\,|\,0<x_1\le1\}\cup\{0\}$.
Finally, observe that the monic polynomial $Q(X)=-P(X)\cdot~i=X^2-Xi$ corresponding to $P$ has compact Gauss-Lucas snail $\sn(Q)=\{x_1i+x_2j+x_3k\in\im(\hh)\,|\,(x_1-1/2)^2+x_2^2+x_3^2\le 1/4\}$.
\end{example}

\begin{remark}
Even for $\cc_I$-polynomials, the Gauss-Lucas snail of $P$ can be strictly smaller than the circular convex hull $\ch(N(P))$. For example, consider the $\cc_i$-polynomial $P(X)=X^3+3X+2i$, with zero sets $V(P)=\{-i,2i\}$ and $V(P')=\s$. The set $\ch(N(P))$ is the closed three-dimensional disc in $\im(\hh)$, with center at the origin and radius 2. The Gauss-Lucas snail
$\sn(P)$ is the subset of $\im(\hh)$ obtained by rotating around the $i$-axis the following subset of the coordinate plane $L=\{x=x_1i+x_2j\in\im(\hh)\,|\,x_1,x_2\in\rr\}$:
\[\{x=\rho\cos(\theta)i+\rho\sin(\theta)j\in L\;|\;0\le\theta\le\pi,\,0\le\rho\le2\cos(\theta/3)\}.\] 
Therefore $\sn(P)$ is a proper subset of $\ch(N(P))$ (the boundaries of the two sets intersect only at the point $2i$). Its boundary is obtained by rotating a curve that is part of the \emph{lima\c con trisectrix} (see Figure \ref{fig:limacon}).
\end{remark}

\begin{figure}
\begin{center}
\includegraphics[width=6cm]{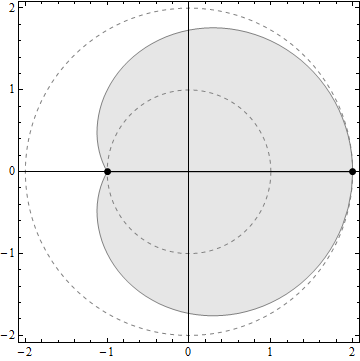}\\
\caption{Cross-sections of $\sn(P)$ (gray), of $V(P')$ and $\ch(N(P))$ (dashed).}
\label{fig:limacon}
\end{center}
\end{figure}

\subsection{Estimates on the norm of the critical points}

Let $p(z)=\sum_{k=0}^da_kz^k$ be a complex polynomial of degree $d\ge1$. The norm of the roots of $p$ can be estimated making use of the norm of the coefficients $\{a_k\}_{k=0}^d$ of $p$. There are several classic results in this direction 
(see e.g.\ \cite[\S8.1]{RahmanSchmeisser}). For instance the estimate \cite[(8.1.2)]{RahmanSchmeisser} (with $\lambda=1,p=2$) asserts that
\begin{equation}\label{eq:cauchy}
\textstyle
\max_{z\in V(p)}|z|\leq|a_d|^{-1}\sqrt{\sum_{k=0}^d|a_k|^2}\;.
\end{equation}

\begin{proposition}\label{pro:sn-compact}
For every monic polynomial $P\in\hh[X]$ of degree $d\geq2$, the Gauss-Lucas snail $\sn(P)$ is a compact subset of $\hh$.
\end{proposition}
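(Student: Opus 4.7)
The plan is to combine the closedness of $\sn(P)$, which is already established in Remark \ref{rem:closed}, with a boundedness argument that applies a uniform Cauchy-type estimate on the roots of each complex slice $P^I$. So the real task is to produce a radius $M>0$, depending only on $P$, such that $\ch_{\cc_I}(P^I)\subset\{q\in\cc_I\mid |q|\le M\}$ for every $I\in\s$.

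First I would use that $P$ is monic of degree $d$: writing $P(X)=\sum_{k=0}^d X^k a_k$ with $a_d=1$, we have $1\in\cc_I$ for every $I\in\s$, so $\pi_I(a_d)=1$. Hence each slice $P^I(X)=\sum_{k=0}^d X^k \pi_I(a_k)$ is itself a monic polynomial of degree $d$ on the complex plane $\cc_I$ (after identifying $(\cc_I,I)$ with $(\cc,i)$). Next I would apply the classical Cauchy estimate \eqref{eq:cauchy}, which yields
\[
\textstyle
\max_{z\in V(P^I)}|z|\le\sqrt{\sum_{k=0}^d |\pi_I(a_k)|^2}.
\]
Since $\pi_I$ is the orthogonal projection of $\hh$ onto $\cc_I$, we have $|\pi_I(a_k)|\le|a_k|$ for every $k$ and every $I$. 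Setting $M:=\sqrt{\sum_{k=0}^d|a_k|^2}$, this gives a uniform bound $|z|\le M$ on all roots of all $P^I$, valid for all $I\in\s$.

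The convex hull $\ch_{\cc_I}(P^I)$ is therefore contained in the closed disc of radius $M$ in $\cc_I$, which in turn lies in the closed ball $\{q\in\hh\mid|q|\le M\}$ of $\hh$, because the quaternionic norm restricts to the usual complex modulus on each $\cc_I$. Taking the union over $I\in\s$ gives $\sn(P)\subset\{q\in\hh\mid|q|\le M\}$, so $\sn(P)$ is bounded. Combined with closedness from Remark \ref{rem:closed}, this proves $\sn(P)$ is compact.

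There is no real obstacle here; the proof is a one-line application of the Cauchy bound once one notices two clean structural facts: monicity of $P$ transfers to monicity of each $P^I$ (so the denominator $|a_d|^{-1}$ in \eqref{eq:cauchy} is $1$ uniformly in $I$), and orthogonal projection is norm-nonincreasing, so the remaining coefficients are controlled by $|a_0|,\dots,|a_{d-1}|$ uniformly in $I$. The only point that requires a brief verification is that the estimate in \eqref{eq:cauchy}, which lives in $\cc$, transfers verbatim to $\cc_I$; this is immediate via the real-algebra isomorphism $\cc\to\cc_I$ sending $i\mapsto I$.
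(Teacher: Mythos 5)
Your proof is correct and coincides essentially with the paper's own argument: both use that monicity of $P$ transfers to each slice $P^I$, apply the Cauchy estimate \eqref{eq:cauchy} together with the norm-nonincreasing property of the orthogonal projection $\pi_I$ to get a uniform bound $\sqrt{\sum_{k=0}^d|a_k|^2}$ on the roots (hence on the convex hulls $\ch_{\cc_I}(P^I)$), and then invoke the closedness established in Remark \ref{rem:closed}. No differences worth noting.
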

\begin{proof}
Since $P=\sum_{k=0}^dX^ka_k$ is monic, every polynomial $P^I$ is monic. From \eqref{eq:cauchy} it follows that $\max_{x\in V(P^I)}|x|^2\le\sum_{k=0}^d|\pi_I(a_k)|^2\le\sum_{k=0}^d|a_k|^2$ and hence $\sn(P)\subset \{x\in\hh\,|\, |x|^2\le \sum_{k=0}^d|a_k|^2\}$ is bounded. Since $\sn(P)$ is closed in $\hh$, as seen in Remark \ref{rem:closed}, it is also a compact subset of $\hh$.
\end{proof}

Define a function $C:\hh[X]\to\rr\cup\{+\infty\}$ as follows: $C(a):=+\infty$ if $a$ is a quaternionic constant and
\[
\textstyle
C(P):=|a_d|^{-1}\sqrt{\sum_{k=0}^d|a_k|^2} \qquad \text{if $P(X)=\sum_{k=0}^dX^ka_k$ with $d\ge1$ and $a_d\neq0$.}
\]

\begin{proposition}\label{pro:estimate}
For every polynomial $P\in\hh[X]$ of degree $d\geq1$, it holds
\begin{equation}\label{eq:C}
\max_{x\in V(P)}|x|\leq C(P).
\end{equation}
\end{proposition}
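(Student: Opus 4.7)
The plan is to transfer the classical Cauchy bound argument leading to \eqref{eq:cauchy} verbatim to the quaternionic setting. The classical proof relies only on two features that survive the passage from $\cc$ to $\hh$: the norm is multiplicative, so that $|x^k a_k|=|x|^k|a_k|$ even though $\hh$ is noncommutative; and $\hh$ is a Euclidean vector space, so Cauchy--Schwarz is available. Since every manipulation will be in terms of absolute values, the ordering of the coefficients and the variable $X$ in the polynomial $P$ plays no role at all.

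First I would dispose of the trivial case $|x|\le 1$: there $|x|\le 1\le |a_d|^{-1}|a_d|\le C(P)$, and nothing more is needed. For a root $x\in V(P)$ with $|x|>1$, I would rewrite the equation $\sum_{k=0}^d x^k a_k=0$ as $x^d a_d=-\sum_{k=0}^{d-1}x^k a_k$ and take absolute values. The multiplicativity of the quaternionic norm yields $|a_d||x|^d\le\sum_{k=0}^{d-1}|a_k||x|^k$, and applying Cauchy--Schwarz to the real vectors $(|a_0|,\ldots,|a_{d-1}|)$ and $(1,|x|,\ldots,|x|^{d-1})$ in $\rr^d$ then gives
\[
|a_d|^2|x|^{2d}\le\Bigl(\textstyle\sum_{k=0}^{d-1}|a_k|^2\Bigr)\Bigl(\textstyle\sum_{k=0}^{d-1}|x|^{2k}\Bigr).
\]

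To finish I would exploit $|x|>1$ to bound the geometric sum by $\sum_{k=0}^{d-1}|x|^{2k}=(|x|^{2d}-1)/(|x|^2-1)\le |x|^{2d}/(|x|^2-1)$, cancel the factor $|x|^{2d}$, and rearrange to get $|x|^2\le |a_d|^{-2}\sum_{k=0}^d|a_k|^2=C(P)^2$. No genuine obstacle is expected: the only points worth checking are the multiplicativity of the quaternionic Euclidean norm, which is standard, and the fact that Cauchy--Schwarz is being applied to real vectors with nonnegative entries, so no sign issue coming from noncommutativity can interfere.
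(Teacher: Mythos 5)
Your proof is correct. The one step worth checking carefully is the passage from $\sum_{k=0}^d x^k a_k=0$ to $|a_d|\,|x|^d\le\sum_{k=0}^{d-1}|a_k|\,|x|^k$, and it is fine: the quaternionic norm is multiplicative, so $|x^ka_k|=|x|^k|a_k|$, and the triangle inequality does the rest; after that everything happens in $\rr$. Where you diverge from the paper is in how this real inequality is exploited. The paper introduces the auxiliary polynomial $h(z)=z^d-\sum_{k=0}^{d-1}b_kz^k$ with $b_k=|a_ka_d^{-1}|$, invokes the cited lemma that $h$ has a unique positive root $\rho$ and is positive beyond it, shows $h(C(P))>0$ by essentially the same Cauchy--Schwarz computation you perform, and concludes $|x|\le\rho<C(P)$; this requires setting aside the degenerate case $P=X^da_d$. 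You instead apply Cauchy--Schwarz directly at $z=|x|$, dispose of the trivial case $|x|\le1$ so that the geometric sum can be bounded by $|x|^{2d}/(|x|^2-1)$, and rearrange. Your version is more self-contained and slightly more elementary (no appeal to the structure of the positive roots of $h$), at the cost of the case split; the paper's version produces the formally sharper intermediate bound $|x|\le\rho$, the genuine Cauchy bound, of which $C(P)$ is only an upper estimate. Both arguments rest on the same two pillars you identify at the outset: multiplicativity of the quaternionic norm and Cauchy--Schwarz over $\rr$.
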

\begin{proof}
We follow the lines of the proof of estimate \eqref{eq:cauchy} for complex polynomials given in \cite{RahmanSchmeisser}. Let $P(X)=\sum_{k=0}^dX^ka_k$ with $d\ge1$ and $a_d\neq0$. 
We can assume that $P(X)$ is not the monomial $X^d a_d$, since in this case the thesis is immediate.
Let $b_k=|a_ka_d^{-1}|$ for every $k=0,\ldots,d-1$. The real polynomial 
$h(z)=z^d-\sum_{k=0}^{d-1}b_kz^k$ has exactly one positive root $\rho$ and is positive for real $z>\rho$ (see \cite[Lemma~8.1.1]{RahmanSchmeisser}). Let $S:=\sum_{k=0}^{d-1}b_k^2=C(P)^2-1$. From the Cauchy-Schwartz inequality, it follows that
\[\left(\sum_{k=0}^{d-1}b_kC(P)^k\right)^2\le 
S\sum_{k=0}^{d-1}C(P)^{2k}=
(C(P)^2-1)\frac{C(P)^{2d}-1}{C(P)^2-1}<C(P)^{2d}.
\]
Therefore $h(C(P))>0$ and then $C(P)>\rho$.  Let $x\in V(P)$. 
It remains to prove that $|x|\le\rho$. Since $x^d=-\sum_{k=0}^{d-1}x^ka_ka_d^{-1}$, it holds
\[|x|^d\le\sum_{k=0}^{d-1}|x|^k\left|a_ka_d^{-1}\right|=\sum_{k=0}^{d-1}|x|^kb_k.\]
This means that $h(|x|)\le0$, which implies $|x|\le\rho$.
\end{proof}

From Proposition~\ref{pro:estimate} it follows that for every polynomial $P\in\hh[X]$ of degree $d\geq2$, it holds
\begin{equation}\label{eq:C}
\max_{x\in V(P')}|x|\leq C(P').
\end{equation}
Theorem \ref{thm} allows to obtain a new estimate.

\begin{proposition}
Given any polynomial $P\in\hh[X]$ of degree $d\geq2$, it holds:
\begin{equation}\label{eq:ijk'}
\max_{x\in V(P')}|x|\leq\sup_{I\in\s}\{C(P^I)\}.
\end{equation}
\end{proposition}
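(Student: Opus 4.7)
The plan is to combine Theorem~\ref{thm}, which confines every critical point to the snail $\sn(P)=\bigcup_{I\in\s}\ch_{\cc_I}(P^I)$, with the complex Cauchy-type estimate of Proposition~\ref{pro:estimate} applied slice by slice to the $\cc_I$-polynomials $P^I$.

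First I would dispose of the degenerate case. If $\sup_{I\in\s}C(P^I)=+\infty$, the inequality \eqref{eq:ijk'} holds vacuously, so from now on I assume this supremum is finite. By the definition of $C$, $C(P^I)<+\infty$ forces $P^I$ to be non-constant for every $I\in\s$, which in turn guarantees (by the definition on page before Theorem~\ref{thm}) that $\ch_{\cc_I}(P^I)$ is the convex hull in $\cc_I$ of the finite set $V(P^I)\cap\cc_I$ rather than all of $\cc_I$.

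Next, given $x\in V(P')$, Theorem~\ref{thm} produces some $I\in\s$ with $x\in\ch_{\cc_I}(P^I)$. I then apply Proposition~\ref{pro:estimate} to $P^I\in\hh[X]$ (it is a $\cc_I$-polynomial, hence in particular a quaternionic polynomial, of degree $\geq 1$): every root $y\in V(P^I)$ satisfies $|y|\leq C(P^I)$, so $V(P^I)\cap\cc_I$ is contained in the closed disk $D_I:=\{w\in\cc_I\,|\,|w|\leq C(P^I)\}$. Since $D_I$ is a convex subset of $\cc_I$, it contains the convex hull $\ch_{\cc_I}(P^I)$, whence
\[
|x|\leq C(P^I)\leq\sup_{J\in\s}C(P^J),
\]
which is exactly \eqref{eq:ijk'}.

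The only non-routine point is the treatment of the ``degenerate fibers'' where $P^I$ collapses to a constant (which happens, for instance, if all higher coefficients $a_k$ lie in $\cc_I^\bot$); this is handled cleanly at the outset by observing that such fibers contribute $C(P^I)=+\infty$, making the asserted bound trivial. Once this case is out of the way, the proof is a direct transfer of the standard complex Cauchy bound through each complex slice, using the snail inclusion as the bridge.
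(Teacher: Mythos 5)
Your proof is correct and follows essentially the same route as the paper's: Theorem~\ref{thm} places each critical point in some $\ch_{\cc_I}(P^I)$, and Proposition~\ref{pro:estimate} applied to $P^I$, together with the convexity of the disk of radius $C(P^I)$ in $\cc_I$, yields the bound. Your explicit handling of the degenerate fibers where $P^I$ is constant (so that $C(P^I)=+\infty$) is a detail the paper leaves implicit, but it does not change the argument.
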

\begin{proof}
If $x\in V(P')\cap\cc_I$, Theorem \ref{thm} implies that $x\in\ch_{\cc_I}(P^I)$. Therefore 

\begin{equation*}\label{eq:ijk}
\max_{x\in V(P')\cap\cc_I}|x|\leq C(P^I) \quad \text{ for every $I\in\s$ with $V(P')\cap\cc_I\ne\emptyset$},
\end{equation*}
from which inequality \eqref{eq:ijk'} follows.
\end{proof}

Our estimate \eqref{eq:ijk'} can be strictly better than classic estimate \eqref{eq:C}, as explained below.

\begin{remark}
Let $d\geq3$ and let $P(X)=X^{d-3}\cdot(X-i)\cdot(X-j)\cdot(X-k)$. Using \eqref{P1}, by a direct computation we obtain:
\[
C(P')=d^{-1}\sqrt{8d^2-24d+24}.
\]
Moreover, given $I=\alpha_1i+\alpha_2j+\alpha_3k\in\s$ for some $\alpha_1,\alpha_2,\alpha_3\in\rr$ with $\alpha_1^2+\alpha_2^2+\alpha_3^2=1$, we have
$ \pi_I(i+j+k)=\langle I,i+j+k\rangle I=(\alpha_1+\alpha_2+\alpha_3)I$ and 
$\pi_I(i-j+k)=\langle I,i-j+k\rangle I=(\alpha_1-\alpha_2+\alpha_3)I$
and hence
\[
 C(P^I)=\sqrt{4+4\alpha_1\alpha_3}\leq\sqrt{4+2(\alpha_1^2+\alpha_3^2)}\leq\sqrt{6}.
\]
This implies that
\[
\sup_{I\in\s}\{C(P^I)\}\leq\sqrt{6}.
\]
For every $d\geq11$ it is easy to verify that $\sqrt{6}<C(P')$ so
\[
\sup_{I\in\s}\{C(P^I)\}<C(P'),
\]
as announced.
\end{remark}

\begin{remark}
Some of the results presented here can be generalized to real alternative *-algebras, a setting in which polynomials can be defined and share many of the properties valid on the quaternions (see \cite{GhPe_AIM}).
The polynomials given in Corollary \ref{counterexample} can be defined every time the algebra contains an Hamiltonian triple $i,j,k$. This property is equivalent to say that the algebra contains $\hh$ as a subalgebra (see \cite[\S8.1]{Numbers}). For example, this is true for the algebra of octonions and for the Clifford algebras with signature $(0,n)$, with $n\ge2$. Therefore in all such algebras there exist polynomials for which the zero set $V(P')$ (as a subset of the \emph{quadratic cone}) is not included in the circularization of the convex hull of $V(N(P))$ viewed as a complex polynomial.
\end{remark}






\end{document}